\theoremstyle{plain}
\newtheorem{theorem}{Theorem}
\newtheorem{lemma}{Lemma}
\theoremstyle{definition}
\newtheorem{conjecture}{Conjecture}
\theoremstyle{remark}
\theoremstyle{definition}
\title{Note on the Diameter of Path-Pairable Graphs}
\author{G\'abor M\'esz\'aros\footnote{Research was supported by the Balassi Institute, the Fulbright Commission, and the Rosztoczy Foundation.} \\Central European University \\ \url=Meszaros_Gabor@ceu-budapest.edu=}
\begin{document}
\maketitle
\begin{abstract}
A graph on $2k$ vertices is path-pairable if for any pairing of the vertices the pairs can be joined by edge-disjoint paths. The so far known families of path-pairable graphs have diameter of at most 3. In this paper we present an infinite family of path-pairable graphs with diameter $d(G)=O(\sqrt{n})$ where $n$ denotes the number of vertices of the graph. We prove that our example is extremal up to a constant factor.
\end{abstract}
\section*{Introduction}
Given a fixed integer $k$, a graph $G$ on at least $2k$ vertices is {\it $k$-path-pairable} if for any pair of disjoint sets of vertices $X=\{x_1,\dots,x_k\}$ and $Y=\{y_1,\dots,y_k\}$ of $G$ there exist $k$ edge-disjoint paths $P_i$ such that $P_i$ is a path from $x_i$ to $y_i$, $1\leq i\leq k$. The path-pairability number of a graph $G$ is the largest positive integer $k$ for which $G$ is $k$-path-pairable. The motivation of setting edge-disjoint paths between certain pairs of nodes naturally arose in the study of communication networks. There are various reasons to measure the capability of the network by its path-pairability number, that is, the maximum number of pairs of users for which the network can provide separated communication channels without data collision (see \cite{Cs} for additional details). The nodes corresponding to the users are often called {\it terminal nodes} or {\it terminals}. A graph $G$ on $n=2m$ vertices is {\it path-pairable} if it is $m$-path-pairable, that is, for every pairing of the vertex set $\{x_1,y_1, \dots, x_m,y_m\}$ there exist edge-disjoint paths $P_1,\dots,P_m$ joining $x_1$ to $y_1$,\dots,$x_m$ to $y_m$, respectively. By definition, path-pairable graphs are $k$-path-pairable for $0\leq k\leq \lfloor\frac{n}{2}\rfloor$.

The three dimensional cube $Q_3$ and the Petersen graph $P$ are both known to be path-pairable. The graph shown in Figure 1 is the only path-pairable graph with maximum degree 3 on 12 or more vertices.
Apart from such small and rather sporadic examples we only know few path-pairable families.
Certainly, the complete graph $K_{2k}$ on $n=2k$ vertices is path-pairable. It can be proved easily that the complete bipartite graph $K_{a,b}$ is path-pairable as long as $a+b$ is even and $a,b\neq 2$. Particular species of the latter family, the star graphs $K_{2a+1,1}$ show that path-pairability is achievable even in the presence of vertices of small degrees. They also illustrate that vertices of large degrees are easily accessible transfer stations to carry out linking in the graphs without much effort. That motivates the study of $k$-path-pairable and path-pairable graphs with small maximum degree. Faudree, Gy\'arf\'as and  Lehel  \cite{3reg} gave  examples of $k$-path-pairable graphs for every $k\in\mathbb{N}$ with maximum degree 3. Note that their construction has exponential size in terms of $k$ and is not path-pairable.
\begin{figure}[]\label{pic}
\begin{center}
\includegraphics[width=3cm]{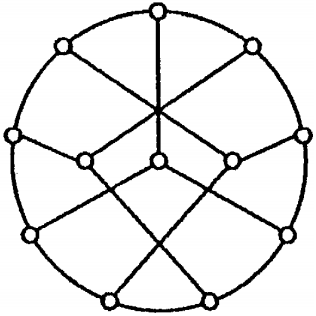} \hspace{1cm}
\caption{Path-pairable graph of order 12}
\end{center}
\end{figure}
Unlike in case of $k$-path-pairability, the maximum degree $\Delta(G)$ must increase together with the size of a path-pairable graph $G$. Faudree, Gy\'arf\'as and Lehel \cite{pp} proved that if $G$ is a path-pairable graph on $n$ vertices with maximum degree $ \Delta$ then $n\leq 2\Delta^\Delta$. The theorem gives an approximate lower bound of $\frac{\log(n)}{\log\log(n)}$ on $\Delta(G)$. By contrast, the graphs of the above presented families have maximum degree $\frac{n}{2}$ or more. Kubicka, Kubicki and Lehel \cite{grid} investigated path-pairability of complete grid graphs and proved that the two-dimensional complete grid $K_a\times K_{b}$ of size $n=ab$ is path-pairable. For $a=b$ that gives examples of path-pairable graphs with maximum degree $\Delta=2a-2<2\sqrt{n}$. In the same paper they raised the question about similar properties of three-dimensional complete grids. Note that if path-pairable, the grid $K_m\times K_m\times K_m$ yields an even better example of size $n=m^3$ and maximum degree $\Delta=3(m-1)=O(\sqrt[3]{n})$. 

We mention that one of the most interesting and promising path-pairable candidate is the $n$-dimensional hypercube $Q_n$. $Q_1=K_2$ is path-pairable while $Q_2=C_4$ is not as pairing of the nonadjacent vertices of the cycle cannot be linked. One can prove that the cube $Q_3$ is path-pairable, the question for higher odd-dimensions has yet to be answered (it was proved in \cite{F} that $Q_n$ is not path-pairable for $n$ even).  
\begin{conjecture}[\cite{Cs}]\label{q}
The $(2k+1)$-dimensional hypercube $Q_{2k+1}$ is path-pairable for all $k\in\mathbb{N}$.
\end{conjecture}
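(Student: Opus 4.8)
The plan is to proceed by induction on $k$, exploiting the recursive structure of the cube together with a parity observation that pinpoints why odd dimensions should behave differently from even ones. Write $n=2k+1$, and note first that if $M$ is the perfect matching on $V(Q_n)$ encoding an arbitrary pairing, then every vertex of the multigraph $Q_n+M$ has degree $n+1$, which is \emph{even} precisely when $n$ is odd. Thus for odd $n$ the graph-plus-demand is Eulerian, the classical situation in which a cut condition is most likely to force an integral edge-disjoint routing; for even $n$ this parity fails, matching the known non-pairability from \cite{F}. The base cases $Q_1=K_2$ and $Q_3$ are already known to be path-pairable, so it remains to pass from $Q_{2k-1}$ to $Q_{2k+1}$.

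For the inductive step I would use the decomposition $Q_{2k+1}\cong Q_{2k-1}\,\square\,C_4$: there are four copies $H_{00},H_{01},H_{10},H_{11}$ of $Q_{2k-1}$, each path-pairable by the induction hypothesis, and the copies of a single vertex across the four layers span a $4$-cycle (a ``fiber''). Crucially, the internal edges of distinct copies are disjoint, so routings carried out separately inside different copies are automatically edge-disjoint; the only shared resource is the set of fiber edges joining the layers. Given an arbitrary pairing of $V(Q_{2k+1})$, I would classify each demand pair by the layers containing its two endpoints, then (i) use the fiber $4$-cycles to transport terminals between layers so that each layer inherits a balanced, internally solvable pairing, and (ii) route the transported demands and the genuinely inter-layer demands along the fibers. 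Step (i) reduces the bulk of the work to four independent applications of the induction hypothesis.

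To control step (ii) I would verify the cut condition for the fiber edges by means of the cube's edge-isoperimetric inequality (Harper's theorem): for every vertex set $S$ the number of demand pairs separated by $S$ is at most $\min(|S|,|V\setminus S|)\le|\delta(S)|$, with equality only at the subcube-halving cut. Hence no coordinate cut is ever overloaded, and in particular the number of pairs that must be pushed across the two ``$C_4$-coordinates'' never exceeds the available fiber capacity. Combined with the Eulerian parity above, this makes the routing feasible at the fractional level.

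The main obstacle is the passage from the cut condition to an \emph{integral} edge-disjoint routing. In full generality ``Eulerian plus cut condition'' does not guarantee integral paths, so the crux of the argument must be the simultaneous realizability of the inter-layer transport: one has to choose the fiber paths for all four copies at once so that they neither collide with one another nor with the four internal routings, and so that the resulting intra-layer demands are exactly of the form the induction hypothesis can absorb. I expect this consistency requirement---essentially a single global matching/flow problem on the fibers, constrained by the four internal routings---to be the hard technical heart, to be resolved by exploiting the cube's symmetry and the exact tightness of the isoperimetric bound rather than by appealing to any general multicommodity-flow theorem.
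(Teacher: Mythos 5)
First, a point of order: the statement you are proving is labelled a \emph{conjecture} in the paper, and the paper contains no proof of it --- the author explicitly states that path-pairability of $Q_{2k+1}$ ``has yet to be answered'' beyond $Q_1$ and $Q_3$. So there is no ``paper's proof'' to compare against; what you have written is a research programme for an open problem, and it should be judged as such. As a programme it identifies reasonable ingredients (the parity of $n+1$, the decomposition $Q_{2k+1}\cong Q_{2k-1}\,\square\,C_4$, the edge-isoperimetric cut condition), and to your credit you flag the integrality issue yourself. But the proposal is not a proof, and the gaps are not merely technical.

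The most concrete gap is in step (i). When you ``transport'' a terminal $v$ from layer $H_{00}$ to layer $H_{01}$ along its fiber, it necessarily lands on the copy of $v$ in $H_{01}$; you have no freedom in choosing the landing vertex. Several transported terminals can therefore collide on a single vertex of the target layer, or land on a vertex that is already a terminal there, so the induced intra-layer demand is in general a \emph{multiset} of pairs with repeated endpoints --- not a pairing. The induction hypothesis (path-pairability of $Q_{2k-1}$) says nothing about routing such demands, so the reduction does not close; you would need a genuinely stronger inductive statement (edge-disjoint routing of bounded-multiplicity demand multisets), and proving that for odd-dimensional cubes is essentially the open problem itself. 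A second gap is that the tightness of Harper's bound at the coordinate cut works \emph{against} you, not for you: for the pairing in which every vertex of $H_{00}\cup H_{01}$ is matched into $H_{10}\cup H_{11}$, the number of crossing demands equals the number of cut edges exactly, so every cut edge must carry exactly one path, leaving zero slack for the consistency argument you defer to ``the cube's symmetry.'' Nothing in the proposal explains how to realize such a perfectly saturated cut integrally, and ``Eulerian plus cut condition'' is known to be insufficient in general, as you note. In short: the statement remains a conjecture, and your sketch does not settle it.
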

A common attribute of the known path-pairable graphs is their small diameter. For each pair $(x,y)$ of terminals in the examples above, the length of the shortest $x,y$ paths is at most 3. While terminal pairs of an actual pairing may not always be joined by shortest paths, small diameter gives the advantage of quick accessibility of the vertices and makes designation of edge-disjoint paths easier. The question concerning the existence of an infinite family of path-pairable graphs with unbounded diameter naturally arises. We use the notation 
$d(G)$ for the diameter of the graph $G$ and $d_{max}(n)$ for the maximal diameter of a path-pairable graph on $n$ vertices. We mention that if true, Conjecture \ref{q} proves the lower bound $\log_2 n\leq d_{max}(n)$ for $n=2^{2k+1}$, $k\in\mathbb{N}$.

The main goal of this note is to study the largest possible diameter of paith-pairable graphs. 
We present a family of path-pairable graphs $\{G_n\}$  such that $G_n$ has $n$ vertices and diameter $O(\sqrt{n})$ for infinitely many values of $n$. We show that our construction is optimal up to a constant factor by proving the following theorem.

\begin{theorem}\label{up}
If $G$ is a path-pairable graph on $n$ vertices with diameter $d\geq 20$ then $d\leq 6\sqrt{2}\cdot \sqrt{n}$.
\end{theorem}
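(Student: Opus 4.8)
The plan is to show that a large diameter forces a sparse cut separating two large vertex sets, which a path-pairable graph cannot afford. The first step is to extract from path-pairability a purely combinatorial \emph{edge-expansion} condition: for every vertex subset $A$, writing $a=\min(|A|,\,n-|A|)$, the edge boundary satisfies $e(A,V\setminus A)\ge a$. Indeed, assuming $|A|\le n/2$, pair each of the $|A|$ vertices of $A$ with a distinct vertex of $V\setminus A$ (possible since $|V\setminus A|\ge|A|$) and pair the remaining $n-2|A|$ vertices of $V\setminus A$ arbitrarily among themselves; this is a legal pairing since $n$ is even. Each of the $|A|$ paths joining a vertex of $A$ to a vertex outside $A$ must use at least one boundary edge, and since the paths are edge-disjoint their edge sets are disjoint, so these boundary edges are distinct, giving $e(A,V\setminus A)\ge|A|=a$.

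Next I would fix two vertices $u,v$ with $\mathrm{dist}(u,v)=d$ and partition $V$ into the BFS layers $L_0,\dots,L_d$ around $u$, where $L_i$ is the set of vertices at distance $i$ from $u$ (connectivity of a path-pairable graph guarantees that these exhaust $V$ and that $v\in L_d$). Writing $s_i=|L_i|$ and $a_i=s_0+\cdots+s_i$, the defining property of BFS layers is that every edge joins vertices in the same or in consecutive layers; hence the boundary of the initial segment $L_0\cup\cdots\cup L_i$ consists exactly of the edges between $L_i$ and $L_{i+1}$, of which there are at most $s_i s_{i+1}$. Combining this trivial upper bound with the expansion condition applied to $A=L_0\cup\cdots\cup L_i$ yields the central inequality
\[
 s_i\,s_{i+1}\ \ge\ \min(a_i,\, n-a_i), \qquad 0\le i\le d-1 .
\]

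It remains to show that this recursion, together with $\sum_i s_i=n$ and $s_0=1$, forces $d=O(\sqrt n)$. Restricting to indices with $a_i\le n/2$ gives $s_i s_{i+1}\ge a_i$, and by AM--GM this yields $s_i+s_{i+1}\ge 2\sqrt{a_i}$, which I would use to prove by induction that the partial sums grow at least quadratically, $a_i\ge c\,i^2$ for an explicit constant $c$. Consequently the first index $i^{*}$ at which $a_{i^{*}}\ge n/2$ satisfies $i^{*}=O(\sqrt n)$. Re-indexing the layers from the $v$ end turns the bound $s_i s_{i+1}\ge n-a_i$, valid on the second half, into the identical recursion, so the number of layers needed to pass from cumulative mass $n/2$ up to $n$ is also $O(\sqrt n)$; adding the two halves bounds $d$.

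The main work --- and the only delicate part --- is the quantitative growth estimate of the last step: converting $s_i+s_{i+1}\ge 2\sqrt{a_i}$ into a clean lower bound on $a_i$ and tracking constants through both halves so that the final bound reads $6\sqrt2\,\sqrt n$ rather than merely $O(\sqrt n)$. A careful version of the induction in fact produces a leading constant comfortably below $6\sqrt2$, and the hypothesis $d\ge 20$ is precisely what absorbs the low-order boundary contributions (the small initial layers and the rounding in the two-sided count) into the stated constant. Everything else is soft: the expansion condition and the layer inequality are immediate, so I expect no genuine obstacle beyond this bookkeeping.
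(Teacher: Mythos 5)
Your proposal is correct, and its engine is the same as the paper's: you extract from path-pairability the cut condition $e(A,V\setminus A)\ge\min(|A|,n-|A|)$, apply it to initial segments of BFS layers, bound the cut by $s_is_{i+1}$, and conclude via AM--GM that consecutive layer sums grow linearly and hence the cumulative counts quadratically --- this is precisely the paper's Lemma \ref{l} ($s_{2k}+s_{2k+1}\ge k$ while $u_{2k+1}\le n/2$, whence $u_{2k+1}\ge\frac{1}{2}k(k+1)$), including the detail that the induction advances two layers at a time because the recursion only controls $s_i+s_{i+1}$. Where you diverge is the endgame. The paper splits the layers into thirds $A,B,C$, shows $|A|,|C|\ge\min(n/2,d^2/100)$, uses a pairing between $A$ and $C$ to force every middle cut (hence every middle layer pair) to be large, and bounds $d$ from $|B|\le n$. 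You instead run the quadratic-growth estimate from both ends up to cumulative mass $n/2$ and add the two lengths; since $a_{i}\gtrsim i^2/8$, each half contributes at most about $2\sqrt n$ layers, so your route yields $d\lesssim 4\sqrt n$, comfortably inside the stated $6\sqrt2\,\sqrt n$ and in fact sharper than the paper's own constant. Your version is also slightly cleaner in that it never needs the middle-third expansion argument at all. The one step you leave as ``bookkeeping'' --- turning $s_i+s_{i+1}\ge 2\sqrt{a_i}$ into $a_i\ge ci^2$ with an explicit $c$ and handling the small initial layers --- is genuinely routine (it is exactly the computation in the paper's Lemma \ref{l}), so I see no gap.
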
 

For a subgraph $H$ of a graph $G$, $|H|$ denotes the number of vertices in $H$.  The degree of a vertex $x$ and the distance of  vertices $x$ and $y$ are denoted by $d(x)$ and $d(x,y)$, respectively. For additional details on path-pairalbe graphs we refer the reader to \cite{Cs}, \cite{F} and \cite{pp}.  

%%%%%%%%%%%%%%%%%%%%%%%%%%%%%%%%%%%%%%%%%%%%%%%%%%%%%%%%%%%%%%%%%%%%%%%%%%%%%%%%%%%%%%%%%%%%%%%%%%%%%%%%%%%%%%

%%%%%%%%%%%%%%%%%%%%%%%%%%%%%%%%%%%%%%%%%%%%%%%%%%%%%%%%%%%%%%%%%%%%%%%%%%%%%%%%%%%%%%%%%%%%%%%%%%%%%%%%%%%%%%

%%%%%%%%%%%%%%%%%%%%%%%%%%%%%%%%%%%%%%%%%%%%%%%%%%%%%%%%%%%%%%%%%%%%%%%%%%%%%%%%%%%%%%%%%%%%%%%%%%%%%%%%%%%%%%

\section*{Construction}

We construct our example of a path-pairable graph on $n$ vertices with diameter $O(\sqrt{n})$ by the graph operation called "blowing-up".
%In a blown-up graph $H$ of a graph $G$ we substitute each vertex of $G$ by a class of independent vertices and join two vertices of different classes by an edge if the  corresponding vertices of $G$ are joined.
%As an example, any complete $n$-partite graph
%$K_{a_1,\dots,a_n}$ can be derived from the complete graph $K_n$ on vertex set $\{x_1,\dots, x_n\}$ by blowing up the vertex $x_i$ to an independent set of size $a_i$ for $i=1,2,\dots, n$.
%In the current construction

Set an arbitrary pairing of the vertices of $G$. We accomplish the linking of the pairs in two phases. During the first phase, for each pair of terminals, we define a path that starts at one of the terminals and ends at some vertex in the class of its pair. If the ending vertex happens to be the actual pair of the terminal, we set this path as the joining path for the given pair, otherwise we continue with the second phase. If two terminals initially belong to the same class, then the pair simply skips the first phase of the linking.
Direct our cycle $C_{2m}$ and the blown-up graph $G$ counterclockwise and label each pair $x,y$ such that there exists a directed $x\rightarrow y$ path of length at most $m$. We start building the above mentioned path for pair $(x,y)$ at vertex $x$. Fix $m$ edge-disjoint matchings $M_{1}^i,\dots,M_{m}^i$ of size $(4m+3)$ between every consecutive classes $S_i$ and $S_{i+1}$. For a pair of terminals $(x,y)$ lying in classes $S_i$ and $S_{i+d}$ (modulo $2m$) at distance $d$ ($1\leq d\leq m$), choose the edge of $M_{1}^{i}$ being adjacent to $x$ and label the other vertex adjacent to it by $p_1(x)$. In step $j$ for $2\leq j\leq d$ take the edge of $M_{j}^{i+j-1}$ being adjacent to $p_{j-1}(x)$ and label its other end by $p_{j}(x)$. Apparently, $y$ and $p_d(x)$ belong to the same class. Phase one ends by assigning a path $P_{xy} =
xp_1(x)\dots p_d(x)$ to each $(x,y)$ pair of terminals.

Observe that paths $P$ and $P'$ assigned to and starting at terminals $x$ and $x'$ of the same class do not contain a common vertex as they are given edges of the same matchings in every step. Now assume that edge $e=(x_{i,j}, x_{i+1,k})$ has been utilized by two paths $P_1$ and $P_2$. It means that $e\in M_{t}^i$ for some $1\leq t\leq m$ and that $P_1$ and $P_2$ must have started in the same class. However, in order to share an edge they also have to share a vertex which contradicts our previous observation. It proves that phase one terminates without edge-collision.

In phase two we finish the linking. For the terminal $y$ initially paired with $x$ and for the endpoint $p_d(x)$ of the path $P_{xy}$ (both vertices lying in $S_i$ for some $i$) consider the yet unused edges of the bipartite subgraph $H_i$ spanned by $S_i$ and $S_{i+1}$. As $d_{H_i}(y),d_{H_i}(p_d(x))\geq 3m+3$, there exists at least $2m+3$ different vertices $z_1,z_2,\dots,z_{2m+3}\in S_{i+1}$ such that $(y,z_k), (p_d(x),z_k)\in E(G), k=1,2,\dots,(2m+3)$. Observe that any vertex in $S_i$ is an endpoint of at most $m+1$ paths defined in phase one, hence out of the $(2m+3)$ listed candidates at most $(2m+2)$ could have been assigned to another pair $(y',p_{d'}(x'))$. It means $x$ and $y$ can be joined by the path $xp_1(x)\dots p_d(x)z_iy$ with an appropriate choice of $z_i$ from the list above. That completes the proof.

%%%%%%%%%%%%%%%%%%%%%%%%%%%%%%%%%%%%%%%%%%%%%%%%%%%%%%%%%%%%%%%%%%%%%%%%%%%%%%%%%%%%%%%%%%%%%%%%%%%%%%%%%%%%%%

%%%%%%%%%%%%%%%%%%%%%%%%%%%%%%%%%%%%%%%%%%%%%%%%%%%%%%%%%%%%%%%%%%%%%%%%%%%%%%%%%%%%%%%%%%%%%%%%%%%%%%%%%%%%%%

%%%%%%%%%%%%%%%%%%%%%%%%%%%%%%%%%%%%%%%%%%%%%%%%%%%%%%%%%%%%%%%%%%%%%%%%%%%%%%%%%%%%%%%%%%%%%%%%%%%%%%%%%%%%%%

\section*{Proof of Theorem \ref{up}}
Assume $G$ is a paith-pairable graph on $n$ vertices with diameter $d\geq 20$. Let $x,y\in V(G)$ such that $d(x,y)=d$.
Define $S_i=\{z\in V(G): d(x,z)=i\}$ for $i=0,1,2,\dots,d$ and $U_i=\bigcup\limits_{j=0}^iS_j$. Set the notation $s_i=|S_i|$ and $u_i=|U_i|$. Note that there is no edge between any $S_i$ and $S_j$ ($i<j$) classes unless they are consecutive, that is, $j=i+1$. Also, we may assume without loss of generality that vertices belonging to consecutive $S_i$ sets are joined by an egde (adding edges between consecutive classes changes neither the diameter nor the path-pairability property of the graph). If $S_d$ contains vertices in addition to $y$, move them to $S_{d-1}$ by joining them to every vertex of $S_{d-2}$ as well as to $y$ in case they were not adjacent. Observe, that in our new graph $z\in S_i$ if and only if $d(y,z)=d-i$, $0\leq i\leq d$. Note again that, while our operation may change the distribution of the vertices among the $S_i$ classes, our newly obtained graph $G'$ has the same diameter as $G$ and is also path-pairable. We could even assume every $S_i$ class to be a complete graph (by adding the necessary edges) without changing the mentioned properties, though this would be a rather cosmetic operation that would simplify the structure of the graph but would not essentially contribute to the proof.

We introduce the notation $S'_i$ for $S_{d-i}$ and divide our sets into three parts creating left, middle and right segments $A,B$ and $C$ as follows:
\begin{center}
\begin{tabular}{c c c c c}
$A=\bigcup\limits_{i=0}^{\lceil\frac{d}{3}\rceil}S_i$&	\hspace{2cm}	&$B=\bigcup\limits_{i=\lceil\frac{d}{3}\rceil+1}^{\lfloor\frac{2d}{3}\rfloor-1}S_i$ &	\hspace{2cm}	&$C=\bigcup\limits_{i=\lfloor\frac{2d}{3}\rfloor}^dS_i$ \\ 
\end{tabular} 
\end{center}
Our main goal is to give a lower estimate on the size of $B$. We first prove the following lemmas:
\begin{lemma}\label{l}
 $s_{2k}+s_{2k+1}\geq k$ as long as $u_{2k+1}\leq \frac{n}{2}$.
\end{lemma}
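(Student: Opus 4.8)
The plan is to turn path-pairability into a lower bound on certain edge cuts, and then to bootstrap the resulting (pointwise too weak) product inequalities into the desired linear bound on $s_{2k}+s_{2k+1}$.

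First I would record the basic expansion property forced by path-pairability: if $W\subseteq V(G)$ with $|W|\le n/2$, then the number of edges leaving $W$ is at least $|W|$. Indeed, since $|W|\le n-|W|$ one may fix a pairing in which every vertex of $W$ is paired with a vertex outside $W$; the corresponding $|W|$ linking paths are edge-disjoint, and each of them must use at least one edge of the boundary of $W$, so the boundary contains at least $|W|$ distinct edges. Applying this with $W=U_j$, and using that the only edges leaving $U_j$ run between $S_j$ and $S_{j+1}$ (which by the standing assumption induce a complete bipartite graph), I obtain the key inequality $s_j s_{j+1}\ge u_j$ for every $j$ with $u_j\le n/2$. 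Under the hypothesis $u_{2k+1}\le \frac n2$ this is available for all indices $j\le 2k$.

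This product bound is too weak to yield the lemma directly: two consecutive classes can have a large product but a small sum, so $s_js_{j+1}\ge u_j$ alone only gives $s_j+s_{j+1}\ge 2\sqrt{u_j}$, while $u_j$ is a priori merely linear in $j$. The point is that summing the inequalities feeds back into itself. Writing $a_\ell=u_{2\ell}$ and grouping the classes into consecutive pairs,
\[
u_{2k-1}=\sum_{\ell=0}^{k-1}(s_{2\ell}+s_{2\ell+1})\ge \sum_{\ell=0}^{k-1} 2\sqrt{s_{2\ell}s_{2\ell+1}}\ge 2\sum_{\ell=0}^{k-1}\sqrt{u_{2\ell}},
\]
so that $a_k\ge u_{2k-1}\ge 2\sum_{\ell=0}^{k-1}\sqrt{a_\ell}$. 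From this self-referential recursion a routine induction yields quadratic growth: assuming $a_\ell\ge \ell^2/4$ for all $\ell<k$ gives $a_k\ge 2\sum_{\ell=0}^{k-1}(\ell/2)=\tfrac{k(k-1)}2\ge \tfrac{k^2}4$ for $k\ge 2$ (the cases $k\le 1$ being immediate), whence $u_{2k}\ge k^2/4$.

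The lemma then follows in one line: by the arithmetic--geometric mean inequality together with the product bound at level $2k$,
\[
s_{2k}+s_{2k+1}\ge 2\sqrt{s_{2k}s_{2k+1}}\ge 2\sqrt{u_{2k}}\ge 2\cdot\frac k2=k .
\]
I expect the only genuinely substantive step to be the first one, namely extracting the cut inequality $s_js_{j+1}\ge u_j$ from path-pairability; once that is in place, the bootstrapping recursion and the closing AM--GM estimate are elementary. The main bookkeeping point is to check that the hypothesis $u_{2k+1}\le n/2$ is precisely what guarantees $u_j\le n/2$ for every index $j$ used above, so that the expansion inequality is legitimately applicable at each level.
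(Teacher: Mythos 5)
Your proof is correct and follows essentially the same route as the paper: both extract the cut inequality $s_j s_{j+1}\ge u_j$ from path-pairability, convert it to a sum bound via AM--GM, and close a self-feeding induction in which the partial sums $u_j$ grow quadratically. The only difference is cosmetic — you induct on the auxiliary bound $u_{2k}\ge k^2/4$ and then deduce the lemma, whereas the paper inducts on the lemma's statement directly and reads off $u_{2k+1}\ge \tfrac12 k(k+1)$ along the way.
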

\begin{proof}We prove our statement by induction on $k$. Apparently, $s_0+s_1\geq 0$ and $s_2+s_3\geq 1$. Observe that the number of edges between $S_{2k+2}$ and $S_{2k+3}$ is at least $u_{2k+1}=\sum\limits_{i=0}^{2k+1}s_i$. Indeed, placing $u_{2k+1}$ terminals in $U_{2k+2}$ and their pairs in $V(G)-U_{2k+2}$ there must be space for at least $u_{2k+1}$ edge-disjoint paths passing from $S_{2k+2}$ to $S_{2k+3}$. By induction hypothesis $u_{2k+1} \geq \frac{1}{2}k(k+1)$ while the number of edges between the two classes is at most $s_{2k+2}s_{2k+3}\leq \big(\frac{s_{2k+2}+s_{2k+3}}{2}\big)^2$.  It yields $\frac{1}{2}k(k+1)\leq \big(\frac{s_{2k+2}+s_{2k+3}}{2}\big)^2$, that is, $s_{2k+2}+s_{2k+3}\geq \sqrt{2k(k+1)}\geq k+1$ if $k\geq 1$.
\end{proof}
\begin{lemma}
$|A|,|C|\geq min\big(\frac{n}{2},\frac{d^2}{100}\big)$.
\end{lemma}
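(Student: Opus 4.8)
The plan is to prove the bound for $|A|$ only; the bound for $|C|$ will then follow by the symmetry between $x$ and $y$ in $G'$. Recall that in $G'$ we have $z\in S_i$ if and only if $d(y,z)=d-i$, and that $d-\lceil d/3\rceil=\lfloor 2d/3\rfloor$, so $C$ is exactly the set playing the role of $A$ once the roles of $x$ and $y$ are interchanged. Hence it suffices to estimate $|A|=u_{\lceil d/3\rceil}$.

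Set $L=\lceil d/3\rceil$ and split into two cases according to whether Lemma~\ref{l} is applicable throughout $A$. If $u_{2k+1}>\frac n2$ for some $k$ with $2k+1\le L$, then $U_{2k+1}\subseteq U_L=A$ gives $|A|\ge u_{2k+1}>\frac n2\ge\min\big(\frac n2,\frac{d^2}{100}\big)$ and we are done. Otherwise $u_{2k+1}\le\frac n2$ for every such $k$, so Lemma~\ref{l} yields $s_{2k}+s_{2k+1}\ge k$ for all $k$ in the range $0\le k\le K$, where $K=\lfloor (L-1)/2\rfloor$ is the largest index with $2k+1\le L$. Summing these inequalities over the consecutive pairs contained in $U_L$ gives
\[
|A|=u_L\ \ge\ \sum_{k=0}^{K}(s_{2k}+s_{2k+1})\ \ge\ \sum_{k=0}^{K}k\ =\ \frac{K(K+1)}{2}.
\]

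It then remains to verify the purely arithmetic inequality $\tfrac12 K(K+1)\ge\tfrac{d^2}{100}$ for $d\ge 20$, where $K=\lfloor(\lceil d/3\rceil-1)/2\rfloor\approx d/6$. The continuous estimate $\tfrac12(d/6)^2=\tfrac{d^2}{72}$ shows the inequality holds asymptotically with room to spare, and the generous constant $100$ (in place of the ideal $72$) is precisely the slack that absorbs the loss coming from the floor and ceiling. I expect this rounding to be the only delicate point: the crude bound $K\ge d/6-1$ is too lossy for $d$ near $20$, so I would finish either by a short case analysis according to $d\bmod 6$, or—more simply—by checking the finitely many values $20\le d\le 60$ directly and using the clean estimate $\tfrac12 K(K+1)\ge\tfrac{(d-9)^2}{72}\ge\tfrac{d^2}{100}$ for $d\ge 60$. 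This establishes the bound for $|A|$, and the $x\leftrightarrow y$ symmetry noted above gives the identical bound for $|C|$.
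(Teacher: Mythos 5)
Your argument is essentially the paper's own proof: apply Lemma~\ref{l} to the consecutive pairs $S_{2k}\cup S_{2k+1}$ inside $A$, sum to get roughly $\tfrac12(d/6)^2=\tfrac{d^2}{72}\ge\tfrac{d^2}{100}$, and obtain the bound for $C$ by exchanging the roles of $x$ and $y$ (you are in fact slightly more careful than the paper about the case $u_{2k+1}>\tfrac n2$ and about which index pairs actually lie inside $U_{\lceil d/3\rceil}$). The only piece you defer --- the arithmetic for $20\le d\le 60$ --- does check out (e.g.\ the worst cases $d=24$ give $6\ge 5.76$ and $d=60$ give $45\ge 36$), so the proof is correct.
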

\begin{proof}
Assume $|A|<\frac{n}{2}$. Using Lemma \ref{l} we know that 
$|A|=u_{\lceil\frac{d}{3}\rceil}=s_0+\dots+s_{\lceil\frac{d}{3}\rceil}\geq 0+1+\dots+\big\lfloor\frac{\lceil\frac{d}{3}\rceil}{2}\big\rfloor\geq\frac{d^2}{100}$. By exchanging the role of $x$ and $y$ the same reasoning shows that $|A'|\geq \frac{d^2}{100}$ if $|A'|<\frac{n}{2}$, where 
$A'=\bigcup\limits_{i=0}^{\lceil\frac{d}{3}\rceil}S'_i$. As $\lceil\frac{d}{3}\rceil+\lfloor\frac{2d}{3}\rfloor=d$, one can easily see that $C=A'$ which completes the proof.
\end{proof}

If $\lceil\frac{d}{3}\rceil<t<\lfloor \frac{2}{3}d\rfloor$,  the number of edges between $S_t$ and $S_{t+1}$ is at least  $min\big(\frac{n}{2},\frac{d^2}{100}\big)$. As seen before, placing $ min\big(\frac{n}{2},\frac{d^2}{100}\big)$ terminals in $A$ and their pairs in $C$ the set $S_t$ has to be able to bridge  $ min\big(\frac{n}{2},\frac{d^2}{100}\big)$ disjoint paths to $S_{t+1}$. The number of crossing edges between these two sets is at most $s_t\cdot s_{t+1}$. It means $\frac{s_t+s_{t+1}}{2}\geq \sqrt{s_ts_{t+1}}\geq  min\big(\frac{\sqrt{n}}{\sqrt{2}},\frac{d}{10}\big)$. 
If $d\geq 16$, then $\lfloor\frac{2}{3}d\rfloor-\lceil\frac{d}{3}\rceil-2\geq \frac{d}{3}-\frac{10}{3}\geq 2$ and $B$ contains at least two different $S_i$ sets. That gives us the requested lower bound on $|B|$:
\begin{equation}
|B|\geq\sum\limits_{t=\lceil\frac{d}{3}\rceil+1}^{\lfloor \frac{2}{3}d\rfloor-2}\frac{s_t+s_{t+1}}{2}\geq\big(\frac{d}{3}-\frac{10}{3}\big)min\big(\frac{\sqrt{n}}{\sqrt{2}},\frac{d}{10}\big)\geq \frac{d}{6}min\big(\frac{\sqrt{n}}{\sqrt{2}},\frac{d}{10}\big) 
\end{equation}
As $|B|\leq n$, our equation proves that $d\leq 6\sqrt{2}\sqrt{n}$.

\section*{Summary}
We proved that the maximal diameter of a path-pairable graph on $n$ vertices has order of magnitude $O(\sqrt{n})$, that is, $\frac{1}{4}\sqrt{n}<d_{max}(n)<6\sqrt{2}\sqrt{n}$ if $n$ is sufficiently large. We mention that in Lemma \ref{l} an even stronger result can be proved about  $s_{2k}+s_{2k+1}$. 
\begin{lemma}\label{comp}
 Given $\varepsilon>0$, $s_{2k}+s_{2k+1}\geq (2-\varepsilon)\cdot k$ holds for sufficiently large $k$, as long as $u_{2k+1}\leq \frac{n}{2}$.
\end{lemma}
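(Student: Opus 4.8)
The plan is to bootstrap the very inequality that drove the proof of Lemma~\ref{l}. Writing $\sigma_k := s_{2k}+s_{2k+1}$, we have the grouping $u_{2k+1}=\sum_{j=0}^{k}\sigma_j$, and the argument of Lemma~\ref{l} in fact establishes the recurrence
\begin{equation}
\sigma_{k+1}\ \geq\ 2\sqrt{u_{2k+1}}\ =\ 2\sqrt{\sum_{j=0}^{k}\sigma_j},
\end{equation}
valid whenever $u_{2k+1}\leq \frac{n}{2}$: this is precisely where path-pairability forces at least $u_{2k+1}$ edges between $S_{2k+2}$ and $S_{2k+3}$, while AM--GM bounds that count from above by $(\sigma_{k+1}/2)^2$. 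Since $u_{2k+1}$ is increasing in $k$, the hypothesis $u_{2k+1}\leq\frac{n}{2}$ at a given level automatically guarantees the same bound at every smaller index, so the whole chain of recurrences up to level $k$ is simultaneously in force.

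Next I would isolate a single bootstrapping step. Suppose a linear bound $\sigma_j\geq \alpha j$ is already known for all $j\geq K$ (Lemma~\ref{l} supplies this with $\alpha=1$, $K=1$). Summing and discarding the bounded, nonnegative contribution of the indices $j<K$ gives
\begin{equation}
u_{2k+1}=\sum_{j=0}^{k}\sigma_j\ \geq\ \alpha\sum_{j=K}^{k}j\ \geq\ \frac{\alpha}{2}\,k^2(1-o(1)),
\end{equation}
and feeding this into the recurrence yields $\sigma_{k+1}\geq 2\sqrt{u_{2k+1}}\geq \sqrt{2\alpha}\,k\,(1-o(1))$. Hence for every $\alpha'<\sqrt{2\alpha}$ there is a threshold $K'$ past which $\sigma_k\geq\alpha' k$; in short, the admissible linear coefficient improves under the map $\alpha\mapsto\sqrt{2\alpha}$.

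I would then analyse this map. Starting from $\alpha_0=1$ and iterating $\alpha_{\ell+1}=\sqrt{2\alpha_\ell}$, a direct computation gives $\alpha_\ell=2^{1-2^{-\ell}}$, so $\alpha_\ell\nearrow 2$. Given $\varepsilon>0$, choose $\ell$ with $\alpha_\ell>2-\varepsilon$; applying the bootstrapping step $\ell$ times produces an increasing chain of thresholds $K_0<K_1<\dots<K_\ell$ together with the desired bound $\sigma_k\geq(2-\varepsilon)k$ for all $k\geq K_\ell$ satisfying $u_{2k+1}\leq\frac{n}{2}$.

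The main obstacle is bookkeeping rather than a new idea: each bootstrapping step loses a little, both through the $o(1)$ term that measures $\sum_{j=K}^k j$ against $\tfrac12 k^2$, and through the passage from ``for $k\geq K$'' to ``for $k\geq K'$''. The decisive point is that only finitely many iterations are required to cross the level $2-\varepsilon$, since $\alpha_\ell\to 2$; consequently the finitely many thresholds amalgamate into one finite $K_\ell$, and the accumulated multiplicative loss is absorbed by aiming at a coefficient strictly between $2-\varepsilon$ and $2$ in the last step. Finally one must keep the constraint $u_{2k+1}\leq\frac{n}{2}$ valid throughout the chain, which is automatic by the monotonicity noted in the first paragraph.
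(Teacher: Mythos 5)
The paper states Lemma~\ref{comp} without proof (it appears only as a remark in the Summary), so there is no proof of record to compare against; your argument is correct and is evidently the intended one: it extracts from the proof of Lemma~\ref{l} the recurrence $s_{2k+2}+s_{2k+3}\geq 2\sqrt{u_{2k+1}}$ and iterates the resulting improvement map $\alpha\mapsto\sqrt{2\alpha}$, whose iterates $2^{1-2^{-\ell}}$ increase to the fixed point $2$. The only points that need care in a full write-up are the ones you already flag -- each pass costs an $o(1)$ factor and a new finite threshold, so the target coefficients must be chosen strictly below $\sqrt{2\alpha}$ at each of the finitely many stages, and the hypothesis $u_{2k+1}\leq\frac{n}{2}$ propagates to all smaller indices by monotonicity of $u$ -- and both are handled correctly.
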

This result with more careful calculation and elaborate analysis may help reducing the gap between the presented upper and lower bounds.
 
\section*{Acknowledgement}
The author wishes to thank Professor Ervin Gy\H ori for his  helpful suggestions, interest, and guidance. The author would also like to thank the anonymus referees for their valuable comments and corrections.

\end{document}